\newtheorem{theorem}{Theorem}
\newtheorem{cor}{Corollary}
\newtheorem{question}{Question}
\theoremstyle{definition}
\newtheorem*{example}{Example}
\title{Note on vanishing power sums of roots of unity}
 \author{Neeraj Kumar and K. Senthil Kumar}
\address{The Institute of Mathematical Sciences\\
4th cross road, CIT Campus, Taramani\\
Chennai 600113, India}
\email{neerajkr@imsc.res.in,  senthilkk@imsc.res.in}
\thanks{{\it Date:} Submitted: March $30,\;2015$, Revision submitted: May $04, \; 2015$.}
\thanks{{\it Key words:} roots of unity, power sum.
\endgraf
{\it 2000 Mathematics Subject Classification:} Primary $11L03$ Secondary $11R18$}
\begin{document}

 \begin{abstract}
 For fixed positive integers $m$ and $\ell$, we  give a complete list of integers $n$ for which their exist $m$th complex roots 
 of unity $x_1,\dots,x_n $ such that  $x_1^{\ell} + \cdots + x_n^{\ell}=0$. This extends the earlier result 
 of Lam and Leung on vanishing sums of roots of unity. Furthermore, we characterize all positive integers $n$ with $2 \leq n \leq m$, 
 for which there are distinct $m$th complex roots of unity $x_1,\dots,x_n $ such that  $x_1^{\ell} + \cdots + x_n^{\ell}=0$.
 \end{abstract}

 \maketitle

\section{Introduction}

Let $m$ be a positive integer. By an $m$th root of unity, we mean a complex number $\zeta$ such that $\zeta^m=1.$ That is, a 
root of the polynomial $X^m-1.$ One can easily see that the roots of $X^m-1$ are distinct, in fact there are exactly $m$, 
$m$th roots of unity. Using the relationship between the roots and the coefficients of a polynomial, we see that the sum 
of all $m$th roots of unity, which is the coefficient of $X^{m-1}$ in $X^m-1,$ is zero.  A natural question is: What are all 
the positive integers $n$ for which there exist $m$th roots of unity $x_1,\ldots,x_n$ (repetition is allowed) such that 
$x_1+\cdots+x_n=0$. A beautiful result of T. Y. Lam and K. H. Leung \cite{lam} gives a complete classification of all such integers. 
Suppose $m$ has prime factorization $p_1^{a_1}\ldots p_r^{a_r}$, where $a_i >0$, then we have the following theorem due to 
Lam and Leung:
\begin{theorem}\label{thm:1}
 Let $n$ be a positive integer. Then there are $m$th roots of unity $x_1,\ldots,x_n$ such that 
 $x_1+\cdots+x_n=0$ if and only if $n$ is of the form $n_1p_1+\cdots+n_rp_r$ where each $n_i$ is a non-negative integer 
 for $1\leq i \leq r.$
\end{theorem}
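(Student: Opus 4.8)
The statement has two halves, and the construction half is routine while the converse carries the real content. For sufficiency, suppose $n = n_1 p_1 + \cdots + n_r p_r$ with each $n_i \ge 0$. For each prime $p_i \mid m$ fix a primitive $p_i$th root of unity $\omega_i$; since $p_i \mid m$ each $\omega_i$ is an $m$th root of unity, and $1 + \omega_i + \cdots + \omega_i^{p_i-1} = 0$ is a vanishing sum of exactly $p_i$ $m$th roots of unity. Concatenating $n_i$ copies of the $p_i$-block for each $i$ produces a vanishing sum of $n_1 p_1 + \cdots + n_r p_r = n$ $m$th roots of unity, so every $n$ in the stated semigroup is realizable.

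For necessity I would first reformulate. Writing $\zeta = \zeta_m$ and letting $a_k \ge 0$ be the multiplicity with which $\zeta^k$ occurs, a vanishing sum of $n$ $m$th roots of unity is the same datum as a polynomial $f(X) = \sum_{k=0}^{m-1} a_k X^k$ with non-negative integer coefficients, $f(1) = n$, and $f(\zeta) = 0$; the last condition is equivalent to $\Phi_m \mid f$ in $\mathbb{Z}[X]$ (Gauss's lemma, $\Phi_m$ monic), where $\Phi_m$ is the $m$th cyclotomic polynomial. I would then induct on the number $r$ of distinct prime divisors of $m$. The base case $r = 1$, $m = p^a$, is immediate: from $f = \Phi_{p^a}\, g$ with $g \in \mathbb{Z}[X]$ we get $n = f(1) = \Phi_{p^a}(1)\, g(1) = p\, g(1)$, so $n$ is a non-negative multiple of $p$, as required.

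For the inductive step, write $m = p^a m'$ with $p = p_r$ and $\gcd(p, m') = 1$, so $m'$ carries precisely the primes $p_1, \dots, p_{r-1}$. Using $\mu_m \cong \mu_{p^a} \times \mu_{m'}$ I would write each root as $\zeta_{p^a}^k \eta$ with $\eta \in \mu_{m'}$ and group the relation by its $\mu_{p^a}$-component, obtaining $\sum_{k} \zeta_{p^a}^k T_k = 0$ with layer-sums $T_k = \sum_\eta a_{k,\eta}\,\eta \in \mathbb{Z}[\zeta_{m'}]$. Since $\Phi_{m'}$ remains irreducible over $\mathbb{Q}(\zeta_{p^a})$ (coprime conductors), $\{1, \zeta_{m'}, \dots, \zeta_{m'}^{\phi(m')-1}\}$ is a basis of $\mathbb{Q}(\zeta_m)$ over $\mathbb{Q}(\zeta_{p^a})$, and feeding this linear independence into the only relations among the $\zeta_{p^a}^k$ (translates of $\Phi_{p^a}$) forces the layer-sums within each $p$-packet to coincide. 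I would then run a dichotomy with a downward induction on $n$: if some $\mu_p$-orbit $\{\zeta_{p^a}^{\,s+jp^{a-1}}\eta\}_{j=0}^{p-1}$ is entirely present (all its multiplicities positive), delete it, removing exactly $p$ roots and preserving vanishing, so $n-p$ and hence $n$ lie in the semigroup; otherwise the configuration is \emph{reduced}, and I would argue each common layer-sum must vanish, so that every $\mu_{p^a}$-layer is itself a vanishing sum of $m'$th roots of unity. The induction hypothesis on $r$ then applies layer by layer, and summing the layer-lengths gives $n \in \langle p_1, \dots, p_{r-1}\rangle \subseteq \langle p_1, \dots, p_r\rangle$.

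The crux, and the step I expect to fight with, is showing that a reduced configuration forces the (equal) layer-sums to be zero. Abstractly this asserts that if several non-negative vectors lying in a common coset of the relation lattice of $\mu_{m'}$ have coordinatewise minimum zero, then that coset is the lattice itself; this is \emph{false} for a general lattice, so the proof must genuinely exploit the cyclotomic structure of the relation lattice. I expect this to demand a further nested induction mirroring the prime factorization of $m'$, and keeping all term-counts non-negative throughout that descent will be the delicate bookkeeping on which the whole converse turns.
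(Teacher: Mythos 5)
First, a remark on the comparison itself: the paper does not prove this statement at all --- Theorem \ref{thm:1} is quoted from Lam and Leung \cite{lam} and used as a black box --- so your attempt can only be judged on its own merits. Your sufficiency half is correct, and in the necessity half the reformulation ($f\in\mathbb{N}[X]$, $f(1)=n$, $\Phi_m\mid f$), the base case $m=p^a$, the splitting $\mu_m\cong\mu_{p^a}\times\mu_{m'}$, and the observation that vanishing forces the layer sums within each $p$-packet to be equal are all sound. The gap is exactly at the step you flagged as the crux, but the situation is worse than you anticipate: the claim you hope to prove there is \emph{false}, not merely hard, and the cyclotomic structure does not rescue it. Take $m=30$, $p=2$, $a=1$, $m'=15$, and the vanishing sum of six $30$th roots of unity
\begin{equation*}
\zeta_5+\zeta_5^2+\zeta_5^3+\zeta_5^4 \;+\; \zeta_2\zeta_3+\zeta_2\zeta_3^2 \;=\; (-1)+(1) \;=\; 0 .
\end{equation*}
Its two $\mu_2$-layers are $f_0=\zeta_5+\zeta_5^2+\zeta_5^3+\zeta_5^4$ and $f_1=\zeta_3+\zeta_3^2$, whose supports in $\mu_{15}$ are disjoint, so no orbit $\{\eta,-\eta\}$ occurs in full: the configuration is reduced in your sense. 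Yet the common layer sum is $T_0=T_1=-1\neq 0$. Moreover the layer lengths are $4$ and $2$, neither of which lies in $W(15)=3\mathbb{N}+5\mathbb{N}$; only the total $6=3+3=2+2+2$ lies in $2\mathbb{N}+3\mathbb{N}+5\mathbb{N}$. So in the reduced case the semigroup membership of $n$ genuinely cannot be obtained layer by layer: the contributions mix across layers, and no bookkeeping inside your dichotomy can close this.

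This counterexample is, in essence, the well-known phenomenon that makes the Lam--Leung theorem nontrivial: a non-negative element of the kernel of $\mathbb{Z}[\mathbb{Z}/m]\to\mathbb{Z}[\zeta_m]$ need not be a non-negative combination of the obvious relations $\sigma(P_i)$ (the sums over the subgroups of prime order) --- the six-term sum above is already such an element, since its support contains no full coset of any $P_i$. Accordingly, their published proof does not argue that reduced configurations have vanishing layers; the case of equal but nonzero layer sums is handled by a genuinely different mechanism (an induction carried out in the group ring, passing through minimal vanishing sums). To complete your argument you would need a new idea precisely for the case where the common layer sum $T$ is nonzero; as structured, your induction does not go through.
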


\noindent Theorem \ref{thm:1} motivate us to ask the following:

\begin{question}\label{lam-ext-quest}
 Let $m$ and $\ell$ be positive integers. What are all the positive integers $n$ for 
which there exist $m$th roots of unity $x_1,\ldots,x_n$ such that $x_1^{\ell} + \cdots + x_n^{\ell}=0$ ?
\end{question}


Note that when $\ell =1$, the complete answer to Question \ref{lam-ext-quest} is given by Theorem \ref{thm:1}. 
However, for $ \ell  \geq 2$, we do not find any results in this direction in the literature. Our objective 
here is to study the case when $ \ell  \geq 2$. First, we fix some notations. Let $m$ be a positive integer, and 
let $\Omega_m$ denotes the set of all $m$th roots of unity. 
For a positive integer $\ell,$ $ W_\ell(m)$ denotes the set of all positive integers $n$ for which 
there exist $n$-elements $x_1,\dots,x_n \; \in \;\Omega_m $ such that $x_1^{\ell} + \cdots + x_n^{\ell}=0$. 
When ${\ell}=1$, we simply denote $ W_\ell(m)$ by $W(m)$. With this notation, Question \ref{lam-ext-quest} can be 
reformulated as follows: { \it  Let $m$ and $\ell$ be positive integers. What are all the positive integers in the set $W_\ell(m)$ ?}

It is clear that if $m$ divides ${\ell}$ then $W_\ell(m)$ is an empty set. Suppose that there are $m$th complex 
roots of unity, say, $x_1,\dots,x_n$ such that $x_1^\ell+\cdots+x_n^\ell=0.$  Since the $\ell$th power of an $m$th root of unity is
still an $m$th root of unity, the equation $x_1^\ell+\cdots+x_n^\ell=0$ with $x_i \in \Omega_m$ can be written in the 
form $y_1+\cdots+y_n=0$ with $y_i \in \Omega_m$. This shows that for any positive integer $m$ and $\ell$, $W_\ell(m)$ is a subset of $W(m)$. It 
follows from Theorem \ref{thm:1} that any positive integers in the set $W_\ell(m)$ must be of the form $n_1p_1+\cdots+n_rp_r$ where 
each $n_i$ is a non-negative integer for $1\leq i \leq r.$ In Section $2$, we give a complete list of integers in the set $W_\ell(m)$
( see Theorem \ref{thm:2}). Moreover, in Section $3$ we find all positive integers $n \in W_\ell(m)$ for which there are distinct $m$th complex roots of 
unity $x_1,\dots,x_n$ such that $x_1^\ell+\cdots+x_n^\ell=0$ ( see Theorem \ref{thm:3}).

There are algebraic aspects why Question \ref{lam-ext-quest} is important. 
For instance, for a positive integer $a$, denote by $p_a$ 
the power sum polynomial $X_1^a+\cdots+X_n^a$ of degree $a$. Let $ \ell < k$ be two positive integers. In commutative algebra, 
one encounters the following situation: To show that the ideal $\langle p_{\ell}, p_k \rangle$ generated by the polynomials 
$p_{\ell}$ and $p_k$ is a prime ideal in $\mathbb{C}[X_1,\dots,X_n]$, one needs to show that the power sum 
polynomial $X_1^{\ell} + \cdots + X_n^{\ell}$  does not vanish when one allows the $X_i$'s to take values among 
the $(\ell-k)$th roots of unity \cite[see proof of Theorem 3.8]{k}.

\vspace{2mm}

\paragraph{{\bf Acknowledgment:}} We are grateful to Professor Ram Murty for his valuable suggestions regarding the paper. 
We also thank the referee for many useful comments for improving the manuscript.  
This project was funded by the Department of Atomic Energy (DAE), Government of India.

\section{vanishing of power sums of roots of unity}\label{main-thm}

Let $m$ and ${\ell}$ be positive integers. In this section, we completely characterize all the positive integers in the set $W_\ell(m)$. 
More precisely, we prove the following theorem:

\begin{theorem}\label{thm:2}
Let $m$ and ${\ell}$ be positive integers. Let $d=(m,{\ell})$ be the greatest common divisor of $m$ and ${\ell}.$ 
Then $W_\ell(m)=W(m/d).$
\end{theorem}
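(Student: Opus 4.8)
The plan is to prove the two inclusions $W_\ell(m) \subseteq W(m/d)$ and $W(m/d) \subseteq W_\ell(m)$ separately, with the whole argument resting on identifying the image of the $\ell$th power map on $\Omega_m$. Fix a primitive $m$th root of unity $\zeta = e^{2\pi i/m}$, so that $\Omega_m = \langle \zeta \rangle$ is cyclic of order $m$, and consider the map $\phi \colon \Omega_m \to \Omega_m$ given by $\phi(x) = x^\ell$. The central computation is to pin down $\phi(\Omega_m)$: since $\phi(\zeta^k) = \zeta^{k\ell}$, the image is the cyclic subgroup $\langle \zeta^\ell \rangle$, and the order of $\zeta^\ell$ equals $m/\gcd(m,\ell) = m/d$. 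As the unique subgroup of $\langle\zeta\rangle$ of that order, this image is exactly $\Omega_{m/d}$, so $\phi$ maps $\Omega_m$ \emph{onto} $\Omega_{m/d}$.

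For the inclusion $W_\ell(m) \subseteq W(m/d)$, suppose $n \in W_\ell(m)$, witnessed by $x_1, \dots, x_n \in \Omega_m$ with $x_1^\ell + \cdots + x_n^\ell = 0$. Setting $y_i = x_i^\ell = \phi(x_i) \in \Omega_{m/d}$, I obtain $(m/d)$th roots of unity with $y_1 + \cdots + y_n = 0$, that is, $n \in W(m/d)$. This is a sharpening of the remark already made in the introduction that $W_\ell(m) \subseteq W(m)$; the only new input is that the powers $x_i^\ell$ in fact lie in the smaller set $\Omega_{m/d}$.

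The converse inclusion $W(m/d) \subseteq W_\ell(m)$ is where the surjectivity of $\phi$ does the work. Given $n \in W(m/d)$, choose $y_1, \dots, y_n \in \Omega_{m/d}$ with $y_1 + \cdots + y_n = 0$. Since $\phi(\Omega_m) = \Omega_{m/d}$, for each $i$ I may pick $x_i \in \Omega_m$ with $x_i^\ell = y_i$; concretely, writing $y_i = \zeta^{j_i d}$ and using a B\'ezout relation $d = a\ell + bm$, one checks that $x_i = \zeta^{j_i a}$ works. Then $x_1^\ell + \cdots + x_n^\ell = y_1 + \cdots + y_n = 0$ with each $x_i \in \Omega_m$, so $n \in W_\ell(m)$. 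Combining the two inclusions yields $W_\ell(m) = W(m/d)$. As a consistency check, when $m \mid \ell$ we have $d = m$, so $m/d = 1$ and $\Omega_{m/d} = \{1\}$, forcing $W(m/d) = \emptyset$, in agreement with the introduction.

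I expect the only genuine obstacle to be the verification that $\phi$ is surjective onto $\Omega_{m/d}$ — everything else is bookkeeping. This reduces to the elementary order computation $\operatorname{ord}(\zeta^\ell) = m/d$ together with the explicit B\'ezout lift above; once surjectivity is secured, both inclusions follow immediately.
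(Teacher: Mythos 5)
Your proof is correct, and it takes a genuinely more direct route than the paper's. The paper argues in two stages: it first treats the case $\gcd(\ell,m)=1$, where $x\mapsto x^{\ell}$ is an automorphism of $\Omega_m$, and then, for $d>1$, factors the power map as $x\mapsto x^{\ell/d}\mapsto\bigl(x^{\ell/d}\bigr)^{d}$, combining the coprime case with the quotient map $\psi_d\colon\Omega_m\to\Omega_{m/d}$, $x\mapsto x^{d}$, which is onto with kernel $\Omega_d$. You collapse all of this into the single computation that the image of $x\mapsto x^{\ell}$ on $\Omega_m$ is $\langle\zeta^{\ell}\rangle$, a subgroup of order $m/\gcd(m,\ell)=m/d$, hence exactly $\Omega_{m/d}$; both inclusions then fall out immediately, with your B\'ezout lift $x_i=\zeta^{j_i a}$ making surjectivity explicit. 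The underlying group theory is the same, but your organization buys something concrete: it needs no case distinction, and it sidesteps the paper's intermediate assertion that ``$\ell/d$ and $m$ are relatively prime,'' which is false in general --- take $m=12$, $\ell=8$, so $d=4$ and $\ell/d=2$ divides $m$; only $\gcd(\ell/d,\,m/d)=1$ is guaranteed. That assertion is what the paper leans on when it says ``all these steps can be reversed,'' and repairing that reversal in general essentially forces one back to your direct identification of the image of the $\ell$-th power map, so your write-up is arguably the more watertight of the two. What the paper's decomposition buys in exchange is the explicit map $\psi_d$ together with its kernel $\Omega_d$, which is reused in the proof of Theorem \ref{thm:3} on distinct roots of unity; your argument, self-contained as it is, does not set up that later machinery.
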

In other words, Theorem \ref{thm:2} says that: For any positive integer $n,$  $x_1^{\ell} + \cdots + x_n^{\ell}=0$ 
with $x_i\in\Omega_m$ if and only if $y_1+\cdots+y_n=0$ with $y_i\in\Omega_{m/d}.$

\begin{proof} It is well known that $\Omega_m$, that is, the set of all $m$th roots of unity, form a group with respect 
to the multiplication of complex numbers. 
In fact, it is a cyclic group of order $m$, generated by the complex number $\zeta_m=\cos 2\pi/m+\mathfrak{i}\sin 2\pi/m.$ There is a 
remarkable property  about finite cyclic groups. Namely, if $G$ is a finite cyclic group and $l$ is a positive integer 
relatively prime to the order of $G,$ then the map 
\begin{eqnarray}\label{equ:1}
x  \longmapsto x^{\ell}    \quad ( x \; \in  \; G)
\end{eqnarray}
is an automorphism of $G$ (In fact, all the automorphisms of $G$ are of the form (\ref{equ:1}) for some integer 
${\ell}$ which is relatively prime to the order of $G$). It follows that, if ${\ell}$ is a positive integer which is relatively 
prime to $m$ then every element of $\Omega_m$ is a ${\ell}$th power of some element of 
$\Omega_m.$ Thus, for an integer $l$ which is relatively prime to $m,$ the equation $x_1^{\ell}+\cdots+x_n^{\ell}=0$ with $x_i\in \Omega_m$ can be replaced by $y_1+\ldots+y_n=0$ 
with $y_i\in \Omega_m,$ and vice versa. This discussion proves Theorem \ref{thm:2} for the case when $\ell$ is relatively prime to $m.$

Now assume that $d>1$. Consider the map 
\begin{eqnarray}\label{psi-map}
\psi_d: \Omega_m  \longrightarrow \Omega_{m/d}
\end{eqnarray}
defined by $x\mapsto x^d$ for $x\in\Omega_m.$ This map is clearly onto, and the kernel is 
exactly $\Omega_d.$ Thus, $\Omega_m/\Omega_d\cong \Omega_{m/d}.$ 
Now suppose that there are elements $x_1,\ldots,x_n\in\Omega_m$ such that $x_1^{\ell}+\cdots+x_n^{\ell}=0.$ Then this sum can be rewritten 
as $\left(x_1^{{\ell}/d}\right)^d+\cdots+\left(x_n^{{\ell}/d}\right)^d=0.$ Since ${\ell}/d$ and $m$ are relatively prime, 
by the above discussion, the latter equation can be rewritten in the form $y_1^d+\cdots+y_n^d=0$ with $y_i\in\Omega_m.$ 
Finally, using the map $\psi_d,$ the latter sum can be realized 
as $z_1+\ldots+z_n=0$ where $z_i\in\Omega_{m/d}$ for $1\leq i\leq n.$ In fact, all these steps can be reversed. 
This completes the proof of Theorem \ref{thm:2}.
\end{proof}

Combining Theorems \ref{thm:1} and \ref{thm:2}, we have the following corollary:

\begin{cor}\label{cor:1} 
Let $m$, $n$ and ${\ell}$ be positive integers. Let $d=(m,{\ell})$ be the greatest common divisor of $m$ and ${\ell}.$ Then there 
are $m$th roots of unity $x_1,\ldots,x_n$ such that $x_1^\ell+\cdots+x_n^\ell=0$ if and only if $n$ is of the form $n_1q_1+\cdots+n_sq_s$ where 
each $n_i$ is a non-negative integer for $1\leq i \leq s$ and $q_1,\dots,q_s$ are distinct prime divisors of $m/d$.
 
\end{cor}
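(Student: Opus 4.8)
The plan is to read the corollary off directly from the two theorems already in hand, since the existence of $m$th roots of unity $x_1,\ldots,x_n$ with $x_1^\ell+\cdots+x_n^\ell=0$ is, by definition, exactly the assertion that $n\in W_\ell(m)$. First I would invoke Theorem \ref{thm:2} to pass from the modulus $m$ to the modulus $m/d$: it gives the identity $W_\ell(m)=W(m/d)$, so the existence of a vanishing $\ell$th power sum of $m$th roots of unity is equivalent to the existence of a vanishing ordinary sum of $(m/d)$th roots of unity, that is, to $n\in W(m/d)$.

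Next I would apply Theorem \ref{thm:1}, but with the modulus taken to be $m/d$ rather than $m$. Writing the prime factorization $m/d=q_1^{b_1}\cdots q_s^{b_s}$, where $q_1,\ldots,q_s$ are precisely the distinct prime divisors of $m/d$ and each $b_i>0$, Theorem \ref{thm:1} asserts that $n\in W(m/d)$ if and only if $n=n_1q_1+\cdots+n_sq_s$ for some non-negative integers $n_1,\ldots,n_s$. Chaining this equivalence with the one from the previous paragraph yields exactly the stated characterization, so the corollary follows by transitivity with no further computation.

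The only point requiring a moment of care is the degenerate case $m/d=1$, which occurs exactly when $m\mid\ell$: here there are no primes $q_i$, the empty representation forces $n=0$, and hence no positive integer $n$ qualifies. This is consistent with the earlier observation that $W_\ell(m)$ is empty whenever $m\mid\ell$, and with the convention underlying Theorem \ref{thm:1} for the modulus $1$. Beyond this bookkeeping there is no genuine obstacle; the whole content of the corollary is the substitution of $m/d$ for $m$ supplied by Theorem \ref{thm:2}, fed into the Lam--Leung classification of Theorem \ref{thm:1}.
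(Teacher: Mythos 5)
Your proof is correct and matches the paper's approach exactly: the paper derives Corollary \ref{cor:1} precisely by combining Theorem \ref{thm:2} (which gives $W_\ell(m)=W(m/d)$) with Theorem \ref{thm:1} applied to the modulus $m/d$, just as you do. Your extra remark on the degenerate case $m/d=1$ is a sensible (though not strictly necessary) addition that the paper handles only implicitly.
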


\begin{example} Let $m=60$, and let $\ell$ be an integer with $1 \leq  \ell < 60$. 
By Theorem \ref{thm:2}, $W_\ell(m)=W(m/d)$ where $d$ is the greatest common divisor of $m$ and $\ell$. 
When $d$ varies over the divisors of $m$, $m/d$ also varies over the divisors of $m$. Thus $W_\ell(m)$ coincides with $W(d)$ for some divisor $d$ of $m$. 
On the other hand, by Theorem \ref{thm:1}, $W(d)=\sum_{i=1}^{s}q_i\mathbb{N}$ 
where $d=q_1^{b_1}\dots q_{s}^{b_s}$ is the prime factorization of $d$. Here $\mathbb{N}$ denotes the set of non-negative integers. We thus have the following table 
 which describe $W(d)$ for all positive divisors $d$ of $m=60$.
\end{example}

{ \footnotesize

\begin{table}[H]
\centering
\begin{tabular}{| c | l |}
 \hline 
 $d$ & $W(d)$ \\ \hline
  $1$ & $\emptyset$ \\ 
  $2$ & $2 \mathbb{N}$ \\
  $3$ & $3 \mathbb{N}$ \\
  $4$ & $2 \mathbb{N}$ \\
  $5$ & $ 5 \mathbb{N}$ \\
  $6$ & $ 2 \mathbb{N}$ + $3 \mathbb{N}  = \mathbb{N} \setminus \{ 1 \}$ \\
  $10$ & $ 2 \mathbb{N}$ + $5 \mathbb{N}  = \mathbb{N}  \setminus  \{1,3 \}$ \\
  $12$ & $ 2 \mathbb{N}$ + $3 \mathbb{N}  = \mathbb{N} \setminus \{1 \}$ \\
  $15$ & $ 3 \mathbb{N}$ + $5 \mathbb{N}  = \mathbb{N} \setminus\{ 1,2,4,7 \}$ \\
  $20$ & $ 2 \mathbb{N}$ + $5 \mathbb{N}  = \mathbb{N}  \setminus \{ 1,3 \}$ \\ 
  $30$ & $ 2 \mathbb{N}$ + $3 \mathbb{N} $ + $5 \mathbb{N}  = \mathbb{N}  \setminus \{ 1 \}$ \\ 
  $60$ & $ 2 \mathbb{N}$ + $3 \mathbb{N} $ + $5 \mathbb{N}  = \mathbb{N} \setminus \{ 1 \}$ \\  
 \hline
\end{tabular}
\label{table}
\end{table}
}

\section{vanishing of power sums of distinct roots of unity}


Let $m$ and $\ell$ be two positive integers. For an integer $n \in W_{\ell}(m)$, the {\it height} $H(n;\ell,m)$ of $n$ is defined to be 
the smallest positive integer $h$ for which there are $m$th roots of unity $x_1,\dots,x_n$ such that $x_1^\ell+\cdots+x_n^\ell=0$ 
and the maximum among the repetition of $x_i$'s is $h$, 
that is, $h$ is the maximum among the $h_i$, where $h_i$ is the number of times $x_i$ appears in the list $x_1,\dots,x_n$.
When $\ell =1$, we denote $H(n;\ell,m)$ by $H(n;m)$. Note that $H(n;m)=1$ provided $ 2 \leq n \leq m$. Gary Sivek \cite{Gary} refined the 
work of Lam and Leung by proving that for any integers $m \geq 2$ and $2 \leq n \leq m$, $H(n;m)=1$ if 
and only if both $n$ and $m-n$ are expressible as a linear combination of the prime factors of $m$ with non-negative 
integer coefficients. Here we extend Sivek's result to vanishing of power sums of distinct roots of unity:

\begin{theorem}\label{thm:3}
Let $m$ and $l$ be positive integers, and let $n$ be an integer such that $2 \leq n\leq m.$ Let $d$ be the greatest common divisor of $m$ and $\ell$. 
Then $H(n;\ell,m)=1$ if and only if $H(n; m/d) \leq d$. 
\end{theorem}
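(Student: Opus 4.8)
The plan is to refine the correspondence underlying the proof of Theorem \ref{thm:2} so that it keeps track of multiplicities. The central object is the $\ell$th power map $\phi\colon \Omega_m \to \Omega_{m/d}$, $x \mapsto x^\ell$. Since $\zeta_m^\ell$ has order $m/d$ (as $d=(m,\ell)$), the image of $\phi$ is exactly the subgroup $\Omega_{m/d}\subseteq\Omega_m$, while its kernel is $\{x\in\Omega_m : x^\ell=1\}=\Omega_d$, which has $d$ elements. Thus $\phi$ is a surjective homomorphism that is \emph{exactly $d$-to-one}: every $w\in\Omega_{m/d}$ has a fiber $\phi^{-1}(w)$ of cardinality precisely $d$. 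This exact count is the feature on which everything turns, and it is exactly where the hypothesis $d=(m,\ell)$ enters.

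First I would prove the forward implication. Assume $H(n;\ell,m)=1$, so there are distinct $x_1,\dots,x_n\in\Omega_m$ with $x_1^\ell+\cdots+x_n^\ell=0$. Setting $z_i=\phi(x_i)=x_i^\ell\in\Omega_{m/d}$ gives $z_1+\cdots+z_n=0$, a vanishing sum of $(m/d)$th roots of unity. I then bound multiplicities: if some $w\in\Omega_{m/d}$ occurred more than $d$ times among the $z_i$, then more than $d$ of the distinct elements $x_i$ would lie in the single fiber $\phi^{-1}(w)$, contradicting $|\phi^{-1}(w)|=d$. Hence the configuration $z_1,\dots,z_n$ has maximal multiplicity at most $d$, so the minimal height satisfies $H(n;m/d)\le d$.

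Next I would prove the converse by lifting. Assume $H(n;m/d)\le d$; by definition there exist $z_1,\dots,z_n\in\Omega_{m/d}$ with $z_1+\cdots+z_n=0$ in which every value is repeated at most $d$ times. For each distinct value $w$ occurring, say, $k_w\le d$ times, I choose $k_w$ distinct preimages inside $\phi^{-1}(w)$; this is possible precisely because each fiber has $d\ge k_w$ elements. Preimages chosen over distinct values $w$ are automatically distinct, and within a single fiber they were chosen distinct, so the resulting list $x_1,\dots,x_n\in\Omega_m$ consists of $n$ distinct roots. By construction $x_i^\ell=\phi(x_i)=z_i$, whence $x_1^\ell+\cdots+x_n^\ell=z_1+\cdots+z_n=0$. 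Therefore these distinct roots witness $H(n;\ell,m)=1$, completing the equivalence.

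I expect the only real subtlety---rather than a genuine obstacle---to be the careful bookkeeping that translates between the three equivalent phrasings ``height equals $1$'', ``all $x_i$ distinct'', and ``no fiber is over-subscribed'', together with the matching of ``$H(n;m/d)\le d$'' with ``there exists a vanishing $(m/d)$-configuration of maximal multiplicity $\le d$''. The exact $d$-to-one property of $\phi$ is what makes the fiber-counting tight in both directions, so I would foreground it and verify that the bound $k_w\le d$ is never violated when lifting. I would also remark that both heights are defined under the same hypothesis: by Theorem \ref{thm:2}, $n\in W_\ell(m)$ if and only if $n\in W(m/d)$, while $2\le n\le m$ guarantees that $n$ distinct $m$th roots are available in the first place.
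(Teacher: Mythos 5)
Your proof is correct and takes essentially the same approach as the paper: both directions rest on the fact that the power map onto $\Omega_{m/d}$ is exactly $d$-to-one, with fiber counting giving the forward implication and a choice of distinct preimages within each fiber giving the converse. The only cosmetic difference is that you work directly with the single map $x \mapsto x^{\ell}$, whereas the paper factors it as the automorphism $x \mapsto x^{\ell/d}$ followed by $\psi_d \colon y \mapsto y^{d}$; the underlying argument is identical.
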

\begin{proof}
Let $\Omega_{m/d} = \{ z_1,\dots,z_{m/d} \}$. Suppose that there are distinct $m$th roots of unity $x_1,\dots,x_n$ such 
that $x_1^\ell+\cdots+x_n^\ell=0$. Since $d$ is the greatest common divisor 
of $\ell$ and $m$, this equation can be rewritten in the form $y_1^d+\cdots+y_n^d=0$ with $y_1,\dots,y_n$ are $m$th roots of unity. 
Using the map $\psi_d$, the latter equation can be written as $\sum_{i=1}^{m/d} a_iz_i =0$ where $a_i$ is the cardinality 
of the set $ \{ y_1,\dots,y_n \}  \cap  \psi_d^{-1}(z_i)$ 
for $1 \leq i \leq m/d$. On the other hand, $\psi_d^{-1}(z)$ has exactly $d$ elements for each $z \in \Omega_{m/d}$. 
It follows that $H(n;m/d) \leq \max \{ a_1,\; \dots, a_{m/d} \} \leq d$. This proves that if $H(n;\ell,m) = 1$ then $H(n;m/d) \leq d$. 

Conversely, suppose that $H(n;m/d) \leq d$. Then there is a partition $( a_1,\dots,a_{m/d} )$ of $n$ into non-negative 
integers $a_i$ with $a_i \leq d$ for $ 1 \leq i \leq m/d$ and $\sum_{i=1}^{m/d} a_iz_i =0$. Let $y_i$ be any element 
of $\psi_d^{-1}(z_i)$ for $1 \leq i \leq m/d$. Then $ \psi_d^{-1}(z_i)= \; y_i \Omega_d \; = \{ \; y_i x \; | \; x \in \Omega_d \}$. Since $a_i \leq d$, 
one can replace $a_iz_i$ by $y_i^d ( x_1^d+\cdots+x_{a_i}^d )$ where $x_1,\dots,x_{a_i}$ are distinct elements of $\Omega_d$. 
Hence $H(n;\ell,m)=H(n;d,m)=1$ since $\sum_{i=1}^{m/d} a_i=n$. This completes the proof of Theorem \ref{thm:3}.  
\end{proof}

\end{document}